\definecolor{labelkey}{rgb}{0.6,0,0}
\newcommandx{\change}[2][1=]{\todo[#1]{#2}}
\newcommandx{\unsure}[2][1=]{\todo[linecolor=red,backgroundcolor=red!25,bordercolor=red,#1]{#2}}
\newcommandx{\rmk}[2][1=]{\todo[linecolor=blue,backgroundcolor=blue!25,bordercolor=blue,#1]{#2}}
\newcommandx{\info}[2][1=]{\todo[linecolor=OliveGreen,backgroundcolor=OliveGreen!25,bordercolor=OliveGreen,#1]{#2}}
\newcommandx{\improvement}[2][1=]{\todo[linecolor=Plum,backgroundcolor=Plum!25,bordercolor=Plum,#1]{#2}}
\newcommandx{\thiswillnotshow}[2][1=]{\todo[disable,#1]{#2}}
\newtheorem{thm}{Theorem}[section]
\newtheorem{prop}[thm]{Proposition}
\newtheorem{cor}[thm]{Corollary}
\theoremstyle{definition}
\theoremstyle{remark}
\title{An inverse problem for the fractional Allen-Cahn equation}
\author{Li Li \thanks{lili19940301@mail.tsinghua.edu.cn}}
\affil{Yau Mathematical Sciences Center, Tsinghua University, Beijing, China}
\date{}
\begin{document}
	
	\maketitle
	
	\noindent \textbf{ABSTRACT.}\, We study an inverse problem for the fractional Allen-Cahn equation. Our formulation and arguments rely on the asymptotics for the fractional equation
	and unique continuation properties.
	
	\section{Introduction}
	We consider the following fractional Allen-Cahn equation
	\begin{equation}\label{fracAC}
		\left\{
		\begin{aligned}
			(-\Delta)^s u_{\epsilon}+\frac{1}{\epsilon^{2s}}W'(u_{\epsilon}) &= f\quad \,\,\, \mathrm{in}\,\, \Omega,\\
			u_{\epsilon}&= g\quad \,\mathrm{in}\,\, \mathbb{R}^n\setminus\Omega.\\
		\end{aligned}
		\right.
	\end{equation}
	Here $\epsilon> 0$ is a small parameter. We require that the potential
	$W\in C^2(\mathbb{R}; [0,\infty))$ satisfies 
	\begin{equation}\label{Wpm1}
		W> 0\,\,\mathrm{in}\,\,(-1, 1),\qquad\{W= 0\}=\{\pm 1\},\qquad W'(\pm 1)= 0,\qquad W''(\pm 1)> 0
	\end{equation}
	and there exist constants $p\in [2, \infty)$, $C> 0$ s.t.
	\begin{equation}\label{pest}
		\frac{1}{C}(|t|^{p-1}-1)\leq |W'(t)|\leq C(|t|^{p-1}+1).
	\end{equation}
	
	In recent years, there have been many studies on the fractional equation in (\ref{fracAC}), which can be viewed as the fractional analogue of the classical elliptic Allen-Cahn (or scalar Ginzburg-Landau) equation. Most of them are motivated by physical problems related to dislocation dynamics or stochastic Ising models arsing from statistical mechanics (see \cite{imbert2009phasefield} and the references therein). 
	
	In this paper, we suppose that $s\in (0, \frac{1}{2})$, $\epsilon_k\to 0^+$ and $|g|= 1$ a.e. in $\mathbb{R}^n\setminus\Omega$. We use $u_k$ to denote a solution of (\ref{fracAC}) corresponding to the parameter $\epsilon_k$. Under certain assumptions, it has been shown that there exists a sequence $\{u_k\}$ s.t.
	$$u_k\to \chi_{E_*}-\chi_{\mathbb{R}^n\setminus E_*}$$ in $\Omega$ for some set $E_*$ satisfying the $2s$-mean curvature of the surface $\partial E_*\cap \Omega$ coincides with $f$ up to a constant only depending on $n$ and $s$. Here $\chi$ denotes the standard characteristic function.
	We will provide an accurate statement of this asymptotic behavior later in Proposition \ref{convTh}.
	
	Our main goal here is to formulate an inverse problem based on the convergence result above. The following theorem is our main result in this paper.
	
	\begin{thm}\label{mainTh}
		We assume that the potential $W$ is analytic on $\mathbb{R}$. Let $V\subset \Omega$ be a nonempty open set outside the support of $f$.
		Then we have the following statements.
		
		(i)\, $W$, $f$ and $\partial E_*\cap \Omega$ can be determined from the knowledge of
		$\{u_k|_V, (-\Delta)^s u_k|_V\}$.
		
		(ii)\,  $f$ and $\partial E_*\cap \Omega$ can be determined from the knowledge of $W$ and $\{(-\Delta)^s u_k|_V\}$.
	\end{thm}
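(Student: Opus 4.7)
The strategy rests on two ingredients: the analyticity of $W$, which lets one recover the nonlinearity from its values on a small range of arguments, and the strong unique continuation property (UCP) for $(-\Delta)^s$---if both $u|_V$ and $(-\Delta)^s u|_V$ vanish on a nonempty open set, then $u\equiv 0$ on $\mathbb{R}^n$.

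For part (i), I would first restrict the equation to $V$. Since $V$ is disjoint from $\mathrm{supp}\,f$, this gives the pointwise identity $W'(u_k(x)) = -\epsilon_k^{2s}(-\Delta)^s u_k(x)$, whose right-hand side is data, so the values $W'(u_k(x))$ at every $x\in V$, $k\in\mathbb{N}$ are known. By Proposition \ref{convTh} one can select a point $x_0\in V$ at which $\{u_k(x_0)\}$ is a nontrivial sequence accumulating at $+1$ (or $-1$); the identity theorem for analytic functions then propagates the knowledge of $W'$ to all of $\mathbb{R}$, and the normalization $W(\pm 1)=0$ pins down $W$ itself. With $W$ recovered, two solutions $u_k,\tilde u_k$ producing the same data satisfy $(u_k-\tilde u_k)|_V = 0 = (-\Delta)^s(u_k-\tilde u_k)|_V$, so the fractional UCP forces $u_k=\tilde u_k$ on $\mathbb{R}^n$. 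The equation then yields $f = (-\Delta)^s u_k + \epsilon_k^{-2s} W'(u_k)$ on $\Omega$, and the limit from Proposition \ref{convTh} identifies $E_*$, hence $\partial E_*\cap\Omega$.

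For part (ii), $W$ is known but $u_k|_V$ is missing. Restricting the equation to $V$ still recovers $W'(u_k)|_V$ from $(-\Delta)^s u_k|_V$, but inversion is delicate because $W'$ is not globally injective. Given two candidate solutions $u_k,\tilde u_k$ with the same data, I would exploit $W'(u_k)=W'(\tilde u_k)$ on $V$ together with the convergence $u_k,\tilde u_k \to \pm 1$ a.e.\ from Proposition \ref{convTh}: the Allen--Cahn maximum principle $|u_k|,|\tilde u_k|\leq 1$ (a standard consequence of $|g|=1$ and the structure of $W$) forces a one-sided approach to each well, making $W'$ strictly negative for arguments slightly below $+1$ and strictly positive for arguments slightly above $-1$. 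The equality $W'(u_k)=W'(\tilde u_k)$ then rules out opposite wells, so the two limits coincide a.e.\ on $V$; strict monotonicity of $W'$ in neighborhoods of $\pm 1$ (from $W''(\pm 1)>0$) gives $u_k=\tilde u_k$ on $V$ for large $k$, and the problem reduces to part (i).

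The main difficulty is precisely the non-injectivity of $W'$ in part (ii): without additional structure the equation $W'(u)=z$ has several roots for small $|z|$, so disentangling the branch requires both the $L^\infty$-bound on $u_k$ and the fine asymptotic information of Proposition \ref{convTh}. A secondary technicality in part (i) is ensuring that, for some point $x_0\in V$, the sequence $\{u_k(x_0)\}$ contains infinitely many distinct values---this should follow because $u_k$ genuinely solves the non-trivial equation and hence cannot be eventually constant equal to $\pm 1$ on a set of positive measure.
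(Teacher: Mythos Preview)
Your plan for part~(i) is essentially the paper's argument. One technical point: you fix a single $x_0\in V$ and try to extract infinitely many distinct values from the sequence $\{u_k(x_0)\}_k$; the paper instead shows, for \emph{each} fixed $k$, that $u_k$ cannot be identically $\pm 1$ on any open subset of $V$ (otherwise $(-\Delta)^s u_k=0$ there too, and Corollary~\ref{UCPc} forces $u_k\equiv\pm 1$ on $\mathbb{R}^n$, contradicting $g_k\not\equiv\pm 1$), then picks $x_k\in U_0$ with $u_k(x_k)\neq\pm 1$ and $u_k(x_k)\to\pm 1$. This bypasses your ``secondary technicality'' cleanly, whereas your fixed-$x_0$ version would need a Baire-type argument to guarantee a point at which infinitely many $u_k(x_0)\neq\pm 1$.

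Your plan for part~(ii) has a genuine gap. You invoke the bound $|u_k|\le 1$ to force a one-sided approach to each well and hence a sign on $W'(u_k)$ near $\pm 1$. But in the setting of the theorem the equation carries a nonzero source $f$ (and, in the precise formulation, exterior data $g_k$ that need only converge to $g$ with $|g|=1$), so no such maximum principle is available: a large source can push $u_k$ past $\pm 1$, and then your sign argument collapses. The paper avoids this entirely. Having $W'(u_k^{(1)})=W'(u_k^{(2)})$ on $U_0$ and both sequences close to (possibly different) wells, it uses the local injectivity of $W'$ near each well to conclude that $u_k^{(2)}-u_k^{(1)}$ is \emph{constant} on $U_0$; since also $(-\Delta)^s(u_k^{(2)}-u_k^{(1)})=0$ on $U_0$, Corollary~\ref{UCPc} propagates this constant to all of $\mathbb{R}^n$. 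If the wells differ, this forces $u_*^{(2)}-u_*^{(1)}\equiv 2$ on $\Omega$, i.e.\ $E_*^{(2)}\cap\Omega=\Omega$ and $E_*^{(1)}\cap\Omega=\emptyset$, which the paper excludes by an explicit nondegeneracy assumption $E_*^{(j)}\cap\Omega\neq\Omega,\emptyset$. Hence both sequences converge to the same well, local monotonicity of $W'$ gives $u_k^{(1)}=u_k^{(2)}$ on $U_0$, and the rest proceeds as in part~(i). In short: replace your maximum-principle step by a second application of the unique continuation corollary together with a nontriviality hypothesis on $E_*$.
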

	
	We remark that our assumption on the analyticity of the potential is reasonable since this at least includes the most classical example 
	$$W(t)= \frac{(1- t^2)^2}{4}.$$
	
	We will provide an accurate statement of this theorem later in Theorem \ref{Th}.
	
	\subsection{Connection with earlier literature}
	The rigorous mathematical study of Calder\'on type inverse problems for the space-fractional equations was initiated in \cite{ghosh2020calderon}. In the seminal paper \cite{ghosh2020calderon}, the authors have shown that the inverse problem can be solved in a relatively straightforward way based on nonlocal phenomena such as the unique continuation property of the fractional Laplacian and the associated Runge approximation property.
	
	We refer readers to \cite{ghosh2020uniqueness, ruland2020fractional, covi2022higher, feizmohammadi2021fractional, feizmohammadi2024calder}
	and the references therein for extended results and various variants in this direction.
	More specifically, we mention that Calder\'on type inverse problems for the fractional nonlinear equations have been studied in \cite{lai2022inverse, lai2023inverse, li2021inverse, li2025inverse}. The approaches in these papers rely on either the first order linearization technique or the multiple-fold linearization technique combined with nonlocal features of the models.
	
	In this paper, our approach will still rely on the unique continuation property of the fractional Laplacian. The main novelty is to exploit the asymptotics for the fractional Allen-Cahn equation in formulating and solving the inverse problem. Compared with the standard linearization technique, this approach has a more explicit geometric interpretation, which sets our model apart from
	the earlier studies mentioned above.
	
	\subsection{Organization}
	The rest of this paper is organized in the following way. In Section 2, we will summarize the preliminary knowledge. In Section 3, we will focus on the existence of solutions of (\ref{fracAC}). In Section 4, we will focus on the unique continuation of the fractional Laplacian in the Sobolev space $\hat{H}^s(\Omega)$. In Section 5, we will focus on asymptotics for the fractional equation (\ref{fracAC}). In Section 6, we will rigorously formulate our inverse problem and provide the proof. In Section 7, we will partially extend the main theorem in the setting of multi-phase transitions.
	\medskip
	
	\noindent \textbf{Acknowledgments.} The author would like to thank Professor Gunther Uhlmann for helpful discussions.
	
	\section{Preliminaries}
	
	Throughout this paper, we use $s\in (0, \frac{1}{2})$ to denote the fractional power, and we use $\Omega$ to denote a bounded domain with smooth boundary. 
	
	\subsection{Function spaces}
	The energy functional associated with (\ref{fracAC}) will be denoted by
	$$\mathcal{F}_\epsilon(u, \Omega):=
	\mathcal{E}_\epsilon(u, \Omega)-\int_\Omega fu\,\mathrm{d}x.$$
	Here $\mathcal{E}_\epsilon(u, \Omega)$ is the energy functional associated with the homogeneous equation. More precisely,
	$$\mathcal{E}_\epsilon(u, \Omega):=
	\mathcal{E}(u, \Omega)+ \frac{1}{\epsilon^{2s}}\int_\Omega W(u)\,\mathrm{d}x,$$
	which consists of Sobolev and 
	potential parts, where the Sobolev energy is given by
	$$\mathcal{E}(u, \Omega):= \frac{c_{n,s}}{4}(\int_\Omega\int_\Omega+ 2\int_\Omega\int_{\mathbb{R}^n\setminus\Omega})\frac{|u(x)- u(y)|^2}{|x-y|^{n+2s}}\mathrm{d}x\mathrm{d}y.$$
	The positive constant $c_{n,s}$ only depends on $n$ and $s$. Here we only consider the $\Omega$-contribution in $H^s$-norm so we omit the set $(\mathbb{R}^n\setminus\Omega)\times (\mathbb{R}^n\setminus\Omega)$ when we integrate. 
	
	We use $\tilde{H}^s(\Omega)$ to denote the closure of $C^\infty_c(\Omega)$ in  $H^s(\mathbb{R}^n)$,
	Here $H^s(\mathbb{R}^n)$ is the standard $L^2$-based Sobolev space $W^{s,2}(\mathbb{R}^n)$. Since $\partial\Omega$ is smooth, it is well-known that $u\in \tilde{H}^s(\Omega)$ if and only if
	$u\in H^s(\mathbb{R}^n)$ with $\mathrm{supp}\,u\subset \overline{\Omega}$.
	We write
	$$\hat{H}^s(\Omega):= \{u\in L^2_{loc}(\mathbb{R}^n): \mathcal{E}(u, \Omega)< \infty\},$$
	which is a Hilbert space equipped with the norm
	$$||u||_{\hat{H}^s(\Omega)}:= (||u||^2_{L^2(\Omega)}+ \mathcal{E}(u, \Omega))^\frac{1}{2}.$$
	We also have $$H^s_{loc}(\mathbb{R}^n)\cap L^\infty(\mathbb{R}^n)\subset \hat{H}^s(\Omega)$$ (see Section 2 in \cite{millot2019asymptotics}).
	For $g\in \hat{H}^s(\Omega)$, we use $H^s_g(\Omega)$ to denote the affine space $g+ \tilde{H}^s(\Omega)$.
	
	The fractional Laplacian $(-\Delta)^s$ can 
	be defined by the pairing 
	\begin{equation}\label{fracLpair}
		\langle (-\Delta)^su, \varphi \rangle:= \frac{c_{n,s}}{2}(\int_\Omega\int_\Omega+ 2\int_\Omega\int_{\mathbb{R}^n\setminus\Omega})\frac{(u(x)- u(y))(\varphi(x)- \varphi(y))}{|x-y|^{n+2s}}\mathrm{d}x\mathrm{d}y
	\end{equation}
	for $u, \varphi\in \hat{H}^s(\Omega)$. If $\varphi$ is restricted to $\tilde{H}^s(\Omega)$, then $(-\Delta)^su$ should be viewed as an element in $H^{-s}(\Omega)$ (the dual space of $\tilde{H}^s(\Omega)$), and it is not difficult to see that this definition coincides with the pointwise definition
	$$ (-\Delta)^su(x):= c_{n,s}\,\mathrm{p.v.}\int_{\mathbb{R}^n}\frac{u(x)- u(y)}{|x-y|^{n+2s}}\mathrm{d}y$$
	for smooth bounded $u$.
	
	\subsection{Concepts in geometry}
	
	The fractional power $s\in (0, \frac{1}{2})$ ensures that the fractional $2s$-perimeter
	$$P_{2s}(U)= P_{2s}(U, \mathbb{R}^n):= 
	\int_{\mathbb{R}^n\setminus U}\int_{U}\frac{1}{|x-y|^{n+2s}}\mathrm{d}x\mathrm{d}y< \infty$$
	for every Lipschitz bounded domain $U$ (see Remark 1.4 in \cite{cinti2019quantitative}).
	It is well-known that 
	$$\frac{1}{\omega_{n-1}}\lim_{s\to \frac{1}{2}^-}(1-2s)P_{2s}(E)= P(E):= \mathcal{H}^{n-1}(\partial E)$$
	whenever $E$ is an open set with smooth boundary. Here $\omega_n$ denotes the volume of the $n$-dimensional unit ball and $\mathcal{H}^{n-1}$ denotes the Hausdorff $(n-1)$-dimensional measure. Hence, $P_{2s}$ can be naturally viewed as a fractional analogue of the classical perimeter $P$.
	
	The fractional $2s$-perimeter in $\Omega$ of a set $E\subset \mathbb{R}^n$ is defined by 
	\begin{equation}\label{farc2sP}
		P_{2s}(E, \Omega):= \frac{1}{2c_{n,s}}\mathcal{E}(\chi_E-\chi_{\mathbb{R}^n\setminus E}, \Omega)
	\end{equation}
	$$= (\int_{E\cap \Omega}\int_{E^c\cap \Omega}
	+ \int_{E\cap \Omega}\int_{E^c\setminus \Omega}+ \int_{E\setminus \Omega}\int_{E^c\cap \Omega})\frac{1}{|x-y|^{n+2s}}\mathrm{d}x\mathrm{d}y.$$
	Here $\chi_E$ is the characteristic function of $E$. 
	We consider the first variation of the $2s$-perimeter
	$$\delta P_{2s}(E, \Omega)[X]:= \frac{\mathrm{d}}{\mathrm{d}t}P_{2s}(\phi_t(E), \Omega)\vert_{t=0}.$$
	Here $\{\phi_t\}$ denotes the flow generated by the vector field $X\in C^1_c(\Omega; \mathbb{R}^n)$. We can compute that
	$$\delta P_{2s}(E, \Omega)[X]= \int_{\partial E\cap \Omega} H^{(2s)}_{\partial E}(x) X\cdot \nu_E\,\mathrm{d}\mathcal{H}^{n-1}$$
	(see the detailed computation in Section 6 in \cite{figalli2015isoperimetry}) where $\nu_E$ is the unit outer normal along $\partial E$ and the nonlocal $2s$-mean curvature is given by
	$$H^{(2s)}_{\partial E}(x):= -\mathrm{p.v.}\int_{\mathbb{R}^n}
	\frac{\chi_E(y)-\chi_{\mathbb{R}^n\setminus E}(y)}{|x-y|^{n+2s}}\mathrm{d}y.$$
	
	We say that $\partial E\cap \Omega$ is a nonlocal $2s$-minimal surface in $\Omega$ provided that $H^{(2s)}_{\partial E}= 0$ along $\partial E\cap \Omega$. Clearly, if $E$ minimizes its $2s$-perimeter in $\Omega$, i.e.
	$$P_{2s}(E, \Omega)\leq P_{2s}(F, \Omega)\qquad \forall\, F\subset \mathbb{R}^n\quad\mathrm{s.t.}\quad F\setminus\Omega= E\setminus\Omega,$$
	then $\partial E\cap \Omega$ is a nonlocal $2s$-minimal surface. 
	
	Given a general function $f$ in $\Omega$, the nonlocal $2s$-mean curvature equation
	$$H^{(2s)}_{\partial E}= \frac{1}{c_{n, s}}f$$
	along $\partial E\cap \Omega$ should be interpreted as the weak formulation
	$$\delta P_{2s}(E, \Omega)[X]=
	\frac{1}{c_{n, s}}\int_{E\cap\Omega}\mathrm{div}(fX)\,\mathrm{d}x,\qquad X\in C^1_c(\Omega; \mathbb{R}^n).$$
	
	We remark that the fractional energy $\mathcal{E}_\epsilon(u, \Omega)$ has also been studied when $s\in [\frac{1}{2}, 1)$, but in this case its $\Gamma$-convergence leads to the classical minimal surface functional instead of the fractional one (see the dichotomy between the two cases $s\geq 1/2$ and $s < 1/2$ in \cite{savin2012gamma}). 
	
	\section{The forward problem}
	
	Given $g\in \hat{H}^s(\Omega)$ and $f\in L^\infty(\Omega)$, we say that $u$ is a (weak) solution of (\ref{fracAC}) provided that $u$ is a critical point of the energy functional $\mathcal{F}_\epsilon(u, \Omega)$ i.e.
	$$\frac{\mathrm{d}}{\mathrm{d}t}\mathcal{F}_\epsilon(u+t\varphi, \Omega)\vert_{t=0}= 0,\qquad \varphi\in \tilde{H}^s(\Omega).$$
	Our goal here is to show the existence of a solution. As mentioned in Remark 3.1 in \cite{millot2019asymptotics}, it suffices to solve the corresponding minimization problem based on the variational approach. For self-completeness, we will provide more details below.
	
	\begin{prop}\label{exist}
		Suppose $g\in \hat{H}^s(\Omega)\cap L^p(\Omega)$.
		Then there exists a minimizer of $\mathcal{F}_\epsilon(u, \Omega)$ in 
		$$H:= H^s_g(\Omega)\cap L^p(\Omega).$$
	\end{prop}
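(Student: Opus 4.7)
The plan is to apply the direct method of the calculus of variations in three moves: establish coercivity of $\mathcal{F}_\epsilon$ on $H$, extract a weak limit from a minimizing sequence, and pass to the limit using lower semicontinuity of each piece of the functional.

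For coercivity, note first that the upper growth bound $|W'(t)|\leq C(|t|^{p-1}+1)$ gives $W(g)\in L^1(\Omega)$, so $\mathcal{F}_\epsilon(g,\Omega)$ is finite and $\inf_H \mathcal{F}_\epsilon<+\infty$. The lower growth bound $|W'(t)|\geq C^{-1}(|t|^{p-1}-1)$, combined with $W\geq 0$ and $W(\pm 1)=0$, yields upon integrating $W'$ outward from $\pm 1$ a pointwise estimate $W(t)\geq c_1|t|^p - c_2$ for positive constants $c_1,c_2$. Together with $\mathcal{E}(u,\Omega)\geq 0$ and Young's inequality on $\int_\Omega fu\,\mathrm{d}x$, this produces
$$\mathcal{F}_\epsilon(u,\Omega)\geq \tfrac12 \mathcal{E}(u,\Omega) + \tfrac{c_1}{2\epsilon^{2s}}\|u\|_{L^p(\Omega)}^p - C(\epsilon,f,\Omega),$$
so $\mathcal{F}_\epsilon$ is bounded below on $H$ and any minimizing sequence $\{u_k\}\subset H$ satisfies $\sup_k[\mathcal{E}(u_k,\Omega)+\|u_k\|_{L^p(\Omega)}]<\infty$.

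Next we extract a weak limit. Writing $u_k=g+v_k$ with $v_k\in \tilde{H}^s(\Omega)$, the bilinear expansion of $\mathcal{E}$ transfers the above $\mathcal{E}$-bound from $u_k$ to $v_k$; since $v_k$ is supported in $\overline{\Omega}$, one has $\mathcal{E}(v_k,\Omega)=\tfrac{c_{n,s}}{4}[v_k]_{H^s(\mathbb{R}^n)}^2$, so $\{v_k\}$ is bounded in $H^s(\mathbb{R}^n)$ as well as in $L^p(\Omega)$. Passing to a subsequence, $v_k\rightharpoonup v$ weakly in $H^s(\mathbb{R}^n)$ and in $L^p(\Omega)$, strongly in $L^2(\Omega)$ by the Rellich compactness for the smooth bounded domain $\Omega$, and pointwise a.e.\ in $\Omega$. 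Weak closedness of the subspaces $\tilde{H}^s(\Omega)\subset H^s(\mathbb{R}^n)$ and $L^p(\Omega)$ places $v$ in $\tilde{H}^s(\Omega)\cap L^p(\Omega)$, and we set $u:=g+v\in H$ as the candidate minimizer.

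Finally we verify $\mathcal{F}_\epsilon(u,\Omega)\leq \liminf_k \mathcal{F}_\epsilon(u_k,\Omega)$. The Sobolev piece $\mathcal{E}(\cdot,\Omega)$ is a nonnegative continuous convex quadratic form on $\hat{H}^s(\Omega)$, hence weakly lower semicontinuous; the potential piece is handled by Fatou's lemma applied to the nonnegative integrand $W(u_k)$ with $u_k\to u$ a.e.; and $\int_\Omega fu_k\,\mathrm{d}x\to \int_\Omega fu\,\mathrm{d}x$ follows from $f\in L^\infty(\Omega)$ together with strong $L^1(\Omega)$-convergence of $u_k$. Combining these three facts gives $\mathcal{F}_\epsilon(u,\Omega)\leq \inf_H \mathcal{F}_\epsilon$, so $u$ is the desired minimizer. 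The only nonroutine point is the coercivity step -- specifically converting the growth hypothesis on $W'$ into the pointwise bound $W(t)\gtrsim |t|^p-1$ around the two wells; everything else follows the standard direct-method template.
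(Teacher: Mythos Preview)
Your proof is correct and follows essentially the same direct-method strategy as the paper: coercivity from the growth of $W$ together with control of the linear term, compactness via $\tilde{H}^s(\Omega)\hookrightarrow L^2(\Omega)$, and lower semicontinuity via Fatou and convexity. The only cosmetic differences are that the paper invokes the fractional Poincar\'e inequality explicitly (rather than your bilinear expansion) and applies Fatou's lemma to the full integrand $\mathcal{E}_1$ at once, while you split off the Sobolev part using weak lower semicontinuity of the quadratic form; neither change affects the substance of the argument.
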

	
	\begin{proof}
		It suffices to study $\mathcal{F}_\epsilon(u, \Omega)$ for $\epsilon= 1$.
		
		In fact, for $u\in H$, we can apply the fractional Poincaré inequality
		$$\langle (-\Delta)^s v, v\rangle\geq c||v||^2_{L^2(\Omega)}$$
		to $v:= u- g\in \tilde{H}^s(\Omega)$. We also apply Cauchy-Schwarz inequality to $\int_\Omega fu$ and estimate the potential energy based on (\ref{pest}). Then we obtain
		$$\mathcal{F}_1(u, \Omega)\geq 
		C_1(||u||^2_{\hat{H}^s(\Omega)}+ ||u||^p_{L^p(\Omega)})- C_2$$
		for $u\in H$. Here $C_1, C_2$ are positive constants. $C_2$ relies on both $f$ and $g$.
		
		Now suppose that $\{u_k\}\subset H$ and $u_k\rightharpoonup u$ weakly in 
		$\hat{H}^s(\Omega)\cap L^p(\Omega)$. Then the weak convergence in $L^p(\Omega)$ ensures that
		$$\int_\Omega fu_k\,\mathrm{d}x\to
		\int_\Omega fu\,\mathrm{d}x.$$
		Also $u_k-u\rightharpoonup 0$ weakly in $\tilde{H}^s(\Omega)$ and the compact embedding $\tilde{H}^s(\Omega)\hookrightarrow L^2(\Omega)$ ensure that there exists a subsequence (still denoted by $\{u_k\}$) s.t.
		$u_k\to u$ in $L^2(\Omega)$, and thus we can further take a subsequence s.t. $u_k\to u$ pointwise. Then we obtain 
		$$\mathcal{E}_1(u, \Omega)\leq \liminf_{k\to\infty}\mathcal{E}_1(u_k, \Omega)$$
		by Fatou’s lemma. Hence, we have verified 
		$$\mathcal{F}_1(u, \Omega)\leq \liminf_{k\to\infty}\mathcal{F}_1(u_k, \Omega).$$
		
		Now we conclude that a minimizer does exist in $H$ since both the coerciveness and the lower semi-continuity have been verified.
	\end{proof}
	
	We remark that a minimizer actually provides a stable solution of (\ref{fracAC}), which is not only a critical point of $\mathcal{F}_\epsilon(u, \Omega)$ but also satisfies
	$$\frac{\mathrm{d}^2}{\mathrm{d}t^2}\mathcal{F}_\epsilon(u+t\varphi, \Omega)\vert_{t=0}\geq 0,$$
	i.e.
	$$\frac{c_{n,s}}{2}(\int_\Omega\int_\Omega+ 2\int_\Omega\int_{\mathbb{R}^n\setminus\Omega})\frac{|\varphi(x)- \varphi(y)|^2}{|x-y|^{n+2s}}\mathrm{d}x\mathrm{d}y+ \frac{1}{\epsilon^{2s}}\int_\Omega W''(u)\varphi^2\,\mathrm{d}x\geq 0.$$
	This actually leads to some control of the potential energy by the Sobolev energy (see \cite{cabre2021stable} for details).
	
	\section{The unique continuation property}
	Now we state the unique continuation property of the fractional Laplacian for $u\in \hat{H}^s(\Omega)$.
	
	\begin{prop}\label{UCP}
		Suppose $u\in \hat{H}^s(\Omega)\cap L^\infty(\mathbb{R}^n)$. Let $U\subset \Omega$ be nonempty and open. If $$(-\Delta)^su= u= 0\quad\text{in}\,\,U,$$
		then $u= 0$ in $\mathbb{R}^n$.
	\end{prop}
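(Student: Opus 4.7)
The plan is to reduce this non-local statement to a local boundary unique continuation problem for a degenerate elliptic equation via the Caffarelli--Silvestre extension, then invoke a standard UCP for the extended problem. Since $u \in L^\infty(\mathbb{R}^n)$ and $s \in (0,1/2)$, the Poisson-type formula
\[
\mathcal{U}(x,y) \;:=\; c_{n,s}\, y^{2s} \int_{\mathbb{R}^n} \frac{u(z)}{(|x-z|^2 + y^2)^{(n+2s)/2}}\, \mathrm{d}z
\]
defines a bounded $C^\infty$ function on $\mathbb{R}^n \times (0, \infty)$ solving $\mathrm{div}(y^{1-2s} \nabla \mathcal{U}) = 0$, with Dirichlet trace $\mathcal{U}(\cdot,0) = u$ and weighted conormal derivative $-\lim_{y \to 0^+} y^{1-2s} \partial_y \mathcal{U}$ proportional to $(-\Delta)^s u$.

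Under the hypotheses $u = 0 = (-\Delta)^s u$ on the open set $U$, both the Dirichlet trace and the weighted conormal derivative of $\mathcal{U}$ vanish on the open boundary piece $U \times \{0\}$. I would then choose a ball $B \subset \mathbb{R}^{n+1}$ centered at a point of $U \times \{0\}$ small enough that $B \cap \{y=0\} \subset U$, and note that the even reflection $\tilde{\mathcal{U}}(x,y) := \mathcal{U}(x,|y|)$ gives a weak solution of $\mathrm{div}(|y|^{1-2s} \nabla \tilde{\mathcal{U}}) = 0$ across all of $B$: the vanishing conormal derivative kills the interface term in the associated integration-by-parts identity. Applying the unique continuation theorem for this degenerate elliptic operator (Fall--Felli via Almgren frequency, or R\"uland via Carleman estimates) yields $\tilde{\mathcal{U}} \equiv 0$ on $B$, hence $\mathcal{U} \equiv 0$ on $B \cap \{y > 0\}$. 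Since the extension equation is uniformly elliptic with real-analytic coefficients on every compactum of the open upper half-space, $\mathcal{U}$ is real analytic there, so analytic continuation forces $\mathcal{U} \equiv 0$ throughout $\mathbb{R}^n \times (0, \infty)$. Taking the boundary trace at $y = 0$ gives $u \equiv 0$ on $\mathbb{R}^n$.

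The main obstacle will be verifying that $\mathcal{U}$ has enough regularity near $U \times \{0\}$ to legitimize both the reflection argument and the hypotheses of Fall--Felli or R\"uland, specifically membership of $\mathcal{U}$ in the weighted Sobolev space $H^1(y^{1-2s}\,\mathrm{d}x\,\mathrm{d}y;\,B^+)$ for a half-ball $B^+$ centered on $U \times \{0\}$. This is precisely where the hypothesis $u \in \hat{H}^s(\Omega)$ enters: since $U \subset \Omega$, $u$ has locally finite fractional Sobolev energy near $U$, which transfers to the requisite weighted $H^1$-regularity of the extension through the standard trace theory for the Caffarelli--Silvestre problem. A secondary technical task, given only that $u$ is bounded rather than decaying at infinity, is to show that the Poisson integral together with its Dirichlet trace and weighted conormal derivative are well-defined and properly identified in a distributional sense; this is handled by truncation and tail estimates exploiting $s > 0$.
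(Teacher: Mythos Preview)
Your proposal is correct and follows essentially the same route as the paper: Caffarelli--Silvestre extension, verification that $u\in\hat{H}^s(\Omega)\cap L^\infty(\mathbb{R}^n)$ yields the needed weighted $H^1_{loc}$-regularity of the extension near $U\times\{0\}$ (the paper cites the results of Millot--Sire--Wang for this), application of R\"uland's boundary UCP to obtain vanishing in a half-ball, and then analytic continuation in the open half-space. The only cosmetic difference is that you spell out the even-reflection step explicitly, whereas the paper invokes R\"uland's Proposition~2.2 directly.
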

	
	For $u\in H^s(\mathbb{R}^n)$, the unique continuation property above has been established in Theorem 1.2 in \cite{ghosh2020calderon} (see \cite{ruland2020fractional} and \cite{ghosh2020uniqueness} for its quantitative and constructive versions). For a general $u\in \hat{H}^s(\Omega)$, we cannot apply the theorem in \cite{ghosh2020calderon} directly, but fortunately the arguments in its proof can still be applied here based on the regularity results for the associated extension problem established in \cite{millot2019asymptotics}.
	
	\begin{proof}
		We consider the Caffarelli-Silvestre extension problem 
		\begin{equation*}
			\left\{
			\begin{aligned}
				\mathrm{div}(y^{1-2s}\nabla \tilde{u})&= 0\qquad\,\,\, \text{in}\,\,\mathbb{R}^{n+1}_+=\{(x, y): x\in \mathbb{R}^n, y> 0\},\\
				\tilde{u}&= u\qquad \,\,\text{on}\,\,\partial\mathbb{R}^{n+1}_+=\mathbb{R}^n\times\{0\}.\\
			\end{aligned}
			\right.
		\end{equation*}
		For $u\in \hat{H}^s(\Omega)$, (2.9) in \cite{millot2019asymptotics} provides the explicit formula for the solution $\tilde{u}$,
		and Lemma 2.10 in \cite{millot2019asymptotics} ensures that $\tilde{u}$ belongs to the weighted Sobolev space
		$$H^1_{loc}(\mathbb{R}^{n+1}_+\cup\Omega, |y|^{1-2s}\mathrm{d}\tilde{x})
		:= \{\tilde{u}\in  L^2_{loc}(\mathbb{R}^{n+1}_+\cup\Omega, |y|^{1-2s}\mathrm{d}\tilde{x}): \nabla\tilde{u}\in  L^2_{loc}(\mathbb{R}^{n+1}_+\cup\Omega, |y|^{1-2s}\mathrm{d}\tilde{x})\}.$$
		Here we write $\tilde{x}:= (x, y)\in \mathbb{R}^n\times \mathbb{R}$, and we identify $\Omega$ with $\Omega\times \{0\}$. Lemma 2.12 in \cite{millot2019asymptotics} ensures that $(-\Delta)^su(x)$ coincides with
		$$-\lim_{y\to 0^+}y^{1-2s}\partial_y\tilde{u}(x, y)$$
		up to a constant multiplicative factor $c_s$
		whenever $u\in \hat{H}^s(\Omega)$ and $x\in \Omega$. Moreover, (2.11) in \cite{millot2019asymptotics} ensures that  $\tilde{u}\in L^\infty(\mathbb{R}^{n+1}_+)$ whenever $u\in L^\infty(\mathbb{R}^n)$.

		Hence, the conditions required in Proposition 2.2 in \cite{ruland2015unique} are satisfied based on the assumption on $u$ so we obtain $\tilde{u}= 0$ in $B\cap \mathbb{R}_+^{n+1}$, where $B$ can be any open ball in $\mathbb{R}^{n+1}$ s.t. $B\cap \partial\mathbb{R}^{n+1}_+\subset U$. Then the classical theory of the elliptic equation with real-analytic coefficients ensures that $\tilde{u}$ is analytic in $\mathbb{R}_+^{n+1}$ so we conclude that 
		$\tilde{u}= 0$ in $\mathbb{R}_+^{n+1}$ and thus
		$u= 0$ in $\mathbb{R}^n$.
	\end{proof}
	
	Note that each constant $c$ belongs to $\hat{H}^s(\Omega)$ so we can apply the proposition above to $u-c$ instead of $u$ to obtain the following immediate corollary.
	
	\begin{cor}\label{UCPc}
		Suppose $u\in \hat{H}^s(\Omega)\cap L^\infty(\mathbb{R}^n)$. Let $U\subset \Omega$ be nonempty and open. If $$(-\Delta)^su= 0\quad \text{and}\quad u=c\quad\text{in}\,\,U,$$
		then $u= c$ in $\mathbb{R}^n$.
	\end{cor}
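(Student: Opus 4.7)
The plan is to reduce Corollary \ref{UCPc} directly to Proposition \ref{UCP} by the elementary shift $v := u - c$. The whole content is to verify that $v$ inherits the hypotheses of the proposition, after which the conclusion is immediate.

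First I would check that the constant function $c$ lies in $\hat{H}^s(\Omega)$. Constants are clearly in $L^2_{loc}(\mathbb{R}^n)$, and the integrand in $\mathcal{E}(c,\Omega)$ vanishes identically since $c(x)-c(y)=0$, so $\mathcal{E}(c,\Omega)=0$ and $c\in\hat{H}^s(\Omega)$. Because $\hat{H}^s(\Omega)$ is a linear space and $L^\infty(\mathbb{R}^n)$ is stable under adding constants, $v=u-c\in\hat{H}^s(\Omega)\cap L^\infty(\mathbb{R}^n)$.

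Next I would verify the two local conditions in $U$ for $v$. The equation $v=0$ in $U$ is just the hypothesis $u=c$ in $U$. For the fractional Laplacian, I would use the pairing formulation (\ref{fracLpair}): for any $\varphi\in\tilde{H}^s(\Omega)$ the integrand defining $\langle(-\Delta)^s c,\varphi\rangle$ contains the factor $c(x)-c(y)\equiv 0$, so $(-\Delta)^s c=0$ as an element of $H^{-s}(\Omega)$. By linearity, $(-\Delta)^s v=(-\Delta)^s u=0$ in $U$.

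With both hypotheses of Proposition \ref{UCP} now satisfied by $v$, applying that proposition yields $v\equiv 0$ on $\mathbb{R}^n$, i.e.\ $u\equiv c$ on $\mathbb{R}^n$. There is no real obstacle here; the only step worth a sentence is the observation that constants belong to $\hat{H}^s(\Omega)$ with vanishing Gagliardo energy, which is why the shift trick works despite constants failing to lie in $H^s(\mathbb{R}^n)$ or $\tilde{H}^s(\Omega)$.
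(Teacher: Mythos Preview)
Your proposal is correct and matches the paper's own argument exactly: the paper simply notes that each constant $c$ belongs to $\hat{H}^s(\Omega)$ and applies Proposition \ref{UCP} to $u-c$. Your write-up fills in a few extra details (linearity of the pairing, vanishing Gagliardo energy of constants), but the route is identical.
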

	
	If $f\in L^\infty(\Omega)$ and $g\in C^{0, 1}_{loc}(\mathbb{R}^n)\cap L^\infty(\mathbb{R}^n)$, then Proposition \ref{exist} ensures the existence of a solution $u\in H^s_g(\Omega)\cap L^p(\Omega)$ of (\ref{fracAC}). Moreover, by Theorem 3.9 and Corollary 3.10 (the maximum principle) in \cite{millot2019asymptotics}, we further have $u\in C(\mathbb{R}^n)\cap L^\infty(\mathbb{R}^n)$. Hence, the regularity assumption on $u$ in the statement is satisfied in this case.
	
	\section{Asymptotics for the fractional equation}
	Our precise formulation of the inverse problem will rely on the following convergence result (Theorem 5.1 in \cite{millot2019asymptotics})
	for the fractional Allen-Cahn equation.
	\begin{prop}\label{convTh}
		Suppose $s\in (0, \frac{1}{2})$ and  $\epsilon_k\to 0^+$. Let $\{g_k\}\subset C^{0, 1}_{loc}(\mathbb{R}^n)$ s.t.
		$$\sup_k ||g_k||_{L^\infty(\mathbb{R}^n\setminus\Omega)}< \infty$$ and $g_k\to g$ in $L^1_{loc}(\mathbb{R}^n\setminus\Omega)$ for some $g$ satisfying $|g|= 1$ a.e. in $\mathbb{R}^n\setminus\Omega$. Let $\{f_k\}\subset C^{0, 1}(\Omega)$ s.t.
		$$\sup_k \{\epsilon^{2s}_k||f_k||_{L^\infty(\Omega)}+ ||f_k||_{W^{1, q}(\Omega)}\}< \infty$$ 
		for some $q\in (\frac{n}{1+2s}, n)$ and $f_k \rightharpoonup f$
		weakly in $W^{1, q}(\Omega)$.
		
		Let $u_k\in H^s_{g_k}(\Omega)\cap L^p(\Omega)$ be a solution of 
		(\ref{fracAC}) corresponding to the parameter $\epsilon_k$, the exterior data $g_k$ and the source $f_k$. If
		$$\sup_k \mathcal{F}_{\epsilon_k}(u_k, \Omega)< \infty,$$
		then there exists a subsequence (still denoted by $\{u_k\}$)
		and a set $E_*\subset \mathbb{R}^n$ of finite $2s$-perimeter in $\Omega$ s.t. the following statements hold.
		
		(i)\, $E_*\cap \Omega$ is open and the nonlocal $2s$-mean curvature equation
		$$H^{(2s)}_{\partial E_*}= \frac{1}{c_{n, s}}f$$
		holds along $\partial E_*\cap \Omega$. In particular,
		$\partial E_*\cap \Omega$ is a nonlocal $2s$-minimal surface in $\Omega$ in the case $f= 0$.
		
		(ii)\, We have the convergence
		$$u_k\to u_*:= \chi_{E_*}-\chi_{\mathbb{R}^n\setminus E_*}\quad
		\mathrm{in}\,\,C^0_{loc}(\Omega\setminus \partial E_*).$$ 
		
		(iii)\, For each open $\Omega'$ compactly contained in $\Omega$, we have
		$$\mathcal{E}(u_k, \Omega')\to 2c_{n,s}P_{2s}(E_*, \Omega').$$
		
		(iv)\, For each open $\Omega'$ compactly contained in $\Omega$, we have the convergence
		$$f_k(x)- \frac{1}{\epsilon^{2s}_k}W'(u_k(x))\to
		(\frac{c_{n, s}}{2}\int_{\mathbb{R}^n}
		\frac{|u_*(x)-u_*(y)|^2}{|x-y|^{n+2s}}\mathrm{d}y)u_*(x)\quad
		\mathrm{in}\,\,H^{-s}(\Omega').$$
		
		(v)\, For each $\delta\in (-1, 1)$, compact set $K\subset \Omega$ and $r> 0$, we have
		$$\{u_k= \delta\}\cap K\subset \mathscr{T}_r(\partial E_*\cap \Omega),\qquad
		\partial E_*\cap K\subset \mathscr{T}_r(\{u_k= \delta\}\cap \Omega)$$
		for sufficiently large $k$. Here $\mathscr{T}_r$ represents the open tubular neighborhood of radius $r$.
	\end{prop}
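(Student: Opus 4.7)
The plan is to combine compactness from the uniform energy bound, identification of the limit configuration through $W$-well analysis, and a limit passage in the inner variation of $\mathcal{F}_{\epsilon_k}$, since $u_k$ is only assumed to be a critical point rather than a minimizer.

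First I would set up compactness. The bound $\sup_k \mathcal{F}_{\epsilon_k}(u_k,\Omega)<\infty$ together with the fractional Poincar\'e inequality and the estimate used in the proof of Proposition \ref{exist} yields a uniform $\hat{H}^s(\Omega)$ bound; the hypotheses on $\|g_k\|_{L^\infty}$ and $\epsilon_k^{2s}\|f_k\|_{L^\infty}$, combined with the maximum principle of \cite{millot2019asymptotics}, give a uniform $L^\infty$ bound. Passing to subsequences, $u_k\rightharpoonup u_*$ weakly in $\hat H^s(\Omega)$, strongly in $L^2_{loc}$, and pointwise a.e. The potential bound $\int_\Omega W(u_k)\le C\epsilon_k^{2s}$ and Fatou's lemma force $W(u_*)=0$ a.e., so $u_* = \chi_{E_*}-\chi_{\mathbb{R}^n\setminus E_*}$ for some measurable $E_*$ of finite $2s$-perimeter in $\Omega$.

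Next I would identify the limit equation via inner variations. For $\Omega'\Subset\Omega$ and $X\in C^1_c(\Omega';\mathbb{R}^n)$ with flow $\phi_t$, testing $\mathcal{F}_{\epsilon_k}$ against $u_k\circ\phi_t^{-1}$ and differentiating at $t=0$ yields
$$\delta \mathcal{E}_{\epsilon_k}(u_k,\Omega')[X] = \int_{\Omega'} u_k\,\mathrm{div}(f_k X)\,\ud x,$$
where $\delta \mathcal{E}_{\epsilon_k}$ packages the Sobolev and potential inner variations. A careful double-integral analysis of the Sobolev piece, combined with strong $L^2_{loc}$ convergence of $u_k$ and an equipartition estimate between Sobolev and potential densities (the nonlocal analogue of the Modica--Mortola trick, developed in \cite{savin2012gamma} and \cite{millot2019asymptotics}), identifies the limit of the LHS as $2c_{n,s}\,\delta P_{2s}(E_*,\Omega')[X]$. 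The RHS converges by $L^1_{loc}$ convergence of $u_k$ to $u_*$ and weak $W^{1,q}$ convergence of $f_k$, producing (after using that $fX$ is compactly supported) $2\int_{E_*\cap\Omega'}\mathrm{div}(fX)\,\ud x$. Matching against the weak formulation in Section 2 yields (i). The energy convergence (iii) is obtained along the way from a $\Gamma$-convergence upper bound via recovery sequences combined with the matching $\liminf$; (iv) then follows by rewriting the equation as $(-\Delta)^s u_k = f_k - \epsilon_k^{-2s}W'(u_k)$ and passing to the distributional limit using (iii).

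The remaining statements are refinements. For (ii), local uniform convergence to $\pm 1$ on $\Omega\setminus\partial E_*$ is extracted from local energy decay on balls avoiding $\partial E_*$ combined with the uniform H\"older estimates for the Caffarelli--Silvestre extension in \cite{millot2019asymptotics}; once the potential energy vanishes on such a ball, continuity of $u_k$ pins it to a single sign. Statement (v) follows immediately from (ii). The main obstacle I expect is the inner-variation step: the nonlocal stress-energy identity must be handled through a delicate double-integral analysis near the diagonal, and the equipartition of Sobolev and potential energies that makes the cancellations work is technically the heart of the argument.
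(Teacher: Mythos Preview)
The paper does not prove this proposition at all: it is simply quoted as Theorem~5.1 in \cite{millot2019asymptotics}, with the remark that the proof there ``relies on a careful analysis of the corresponding extension problem in $\mathbb{R}^{n+1}_+$.'' So your outline is being compared against the Caffarelli--Silvestre extension machinery of that reference rather than anything argued in this paper. Their route lifts each $u_k$ to a degenerate-harmonic extension $\tilde u_k$ on $\mathbb{R}^{n+1}_+$, establishes a monotonicity formula and a clearing-out lemma for the lifted energy, and reads off (ii), (iii), (v) from the resulting energy-density quantization on the boundary; (i) and (iv) then follow by passing the Euler--Lagrange equation to the limit once (ii)--(iii) are available.

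Your direct nonlocal strategy is reasonable in outline, but there is a genuine gap precisely at the step you flag. For \emph{minimizers} the $\Gamma$-convergence upper bound via recovery sequences matches the Fatou $\liminf$ and yields (iii), exactly as in \cite{savin2012gamma}; the paper explicitly notes that the minimizer case is ``much more straightforward'' for this reason. But here $u_k$ is only a critical point, so nothing rules out strict energy drop in the limit, and without (iii) the inner-variation limit need not identify $2c_{n,s}\,\delta P_{2s}(E_*,\Omega')$ rather than a diffuse varifold-type object carrying extra multiplicity. Likewise your derivation of (iv) from (iii) presupposes strong $\hat H^s$-convergence on $\Omega'$, which is exactly what is at stake. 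The missing ingredient is a quantitative clearing-out statement (small energy on a ball forces $u_k$ uniformly close to $\pm1$ there), and in the nonlocal $s<\tfrac12$ setting this has only been obtained through the extension problem and its monotonicity formula. Your sketch would therefore either have to import that machinery wholesale---at which point it collapses into the cited proof---or supply a purely nonlocal substitute for monotonicity and clearing-out, which you have not indicated and which is not presently available.
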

	
	We remark that higher regularity results can be obtained in (ii). In fact, the convergence is in $C^{0, \alpha}_{loc}(\Omega\setminus \partial E_*)$ if $\sup_k ||f_k||_{L^{\infty}(\Omega)}< \infty$
	and in $C^{1, \alpha}_{loc}(\Omega\setminus \partial E_*)$ if $\sup_k ||f_k||_{C^{0, 1}(\Omega)}< \infty$ for every $\alpha\in (0, \beta_*)$, 
	where the constant $\beta_*= \beta_*(n, s)\in (0, 1)$ is given in Lemma 3.2 in \cite{millot2019asymptotics}.
	
	We also remark that (iv) relies on the formula
	$$(-\Delta)^su_*(x)= (\frac{c_{n, s}}{2}\int_{\mathbb{R}^n}
	\frac{|u_*(x)-u_*(y)|^2}{|x-y|^{n+2s}}\mathrm{d}y)u_*(x),$$
	which is based on the definition (\ref{fracLpair}), $u^2_*=1$ and the elementary identity
	$$(u_*(x)-u_*(y))(\varphi(x)-\varphi(y))= \frac{1}{2}|u_*(x)-u_*(y)|^2(u_*(x)\varphi(x)+ u_*(y)\varphi(y)).$$
	
	We need to mention that the proof of this convergence result relies on a careful analysis of the corresponding extension problem in $\mathbb{R}^{n+1}_+$ (see Section 4 in \cite{millot2019asymptotics}). If we only focus on a sequence of minimizers of the energy functional
	$\mathcal{F}_{\epsilon_k}(u, \Omega)$ (which are stable solutions), then the convergence result can be obtained in a much more straightforward way (see Section 2 in \cite{savin2012gamma}).
	
	\section{The inverse problem}
	We are ready to rigorously formulate our inverse problem. The following theorem will be an accurate version of Theorem \ref{mainTh}.
	
	\begin{thm}\label{Th}
		For $j=1, 2$, let $u^{(j)}_k\in H^s_{g^{(j)}_k}(\Omega)\cap L^p(\Omega)$ be a solution of 
		(\ref{fracAC}) corresponding to the potential $W^{(j)}$, the exterior data $g^{(j)}_k$ and the source $f^{(j)}_k$. We assume that $W^{(j)}$ satisfies (\ref{Wpm1}) and (\ref{pest}), and we assume that 
		$\{g^{(j)}_k\}$, $\{f^{(j)}_k\}$, $\{u^{(j)}_k\}$, $g^{(j)}$, $f^{(j)}$ and $E^{(j)}_*$ satisfy all the assumptions and conditions in the statement of Proposition \ref{convTh}.
		
		We further assume that $W^{(j)}$ is analytic on $\mathbb{R}$. We also further assume that 
		$E^{(j)}_*\cap \Omega\neq \Omega$ or $\emptyset$,
		$g^{(j)}_k\not\equiv 1$ or $-1$ in $\mathbb{R}^n\setminus\Omega$,
		$f^{(j)}_k\in C^1_c(\Omega)$, $\mathrm{supp}\, f^{(j)}_k\subset K$ for some compact set $K\subset \Omega$ for all $k$ and 
		$f^{(j)}_k\to f^{(j)}$ in $H^{-s}(\Omega)$. Let $V\subset \Omega$ be a nonempty open set outside $K$.
		
		(i)\, Suppose we know that
		$$u_k^{(1)}|_V= u_k^{(2)}|_V,\qquad (-\Delta)^su_k^{(1)}|_V= (-\Delta)^su_k^{(2)}|_V$$
		for all $k$. Then we have
		$$W^{(1)}= W^{(2)},\quad f^{(1)}= f^{(2)},\quad \partial E^{(1)}_*\cap \Omega= \partial E^{(2)}_*\cap \Omega,\quad P_{2s}(E^{(1)}_*, \Omega')= P_{2s}(E^{(2)}_*, \Omega')$$
		for each open $\Omega'$ compactly contained in $\Omega$.
		
		(ii)\, Suppose we know that
		$$W^{(1)}= W^{(2)},\qquad (-\Delta)^su_k^{(1)}|_V= (-\Delta)^su_k^{(2)}|_V$$
		for all $k$. Then we have
		$$f^{(1)}= f^{(2)},\quad \partial E^{(1)}_*\cap \Omega= \partial E^{(2)}_*\cap \Omega,\quad P_{2s}(E^{(1)}_*, \Omega')= P_{2s}(E^{(2)}_*, \Omega')$$
		for each open $\Omega'$ compactly contained in $\Omega$.
		
	\end{thm}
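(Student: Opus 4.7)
For part (i), the plan is first to apply unique continuation to extend the equality from $V$ to all of $\mathbb{R}^n$. Setting $w_k := u_k^{(1)} - u_k^{(2)}$, the hypotheses give $w_k = 0$ and $(-\Delta)^s w_k = 0$ on $V$; the regularity discussed after Corollary \ref{UCPc} places $w_k$ in $\hat{H}^s(\Omega) \cap L^\infty(\mathbb{R}^n)$, so Proposition \ref{UCP} yields $u_k^{(1)} \equiv u_k^{(2)} =: u_k$ on $\mathbb{R}^n$. In particular the limits satisfy $u_*^{(1)} = u_*^{(2)}$, and the openness of $E_*^{(j)} \cap \Omega$ in Proposition \ref{convTh}(i) upgrades this a.e.\ equality to $E_*^{(1)} \cap \Omega = E_*^{(2)} \cap \Omega$ with matching boundaries and, via Proposition \ref{convTh}(iii), matching localized $2s$-perimeters. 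Subtracting the two equations on $\Omega$ and using $V \subset \Omega \setminus K$ gives $W^{(1)\prime}(u_k) = W^{(2)\prime}(u_k)$ on $V$; by continuity the image $u_k(V)$ is a connected subset of $\mathbb{R}$. If for some $k$ this image is a non-degenerate interval, analyticity forces $W^{(1)\prime} - W^{(2)\prime} \equiv 0$, hence $W^{(1)} = W^{(2)}$ after normalizing via $W^{(j)}(\pm 1) = 0$. Otherwise $u_k|_V$ is constant for every $k$, which together with $f_k^{(j)} = 0$ on $V$ forces $(-\Delta)^s u_k$ to also be constant on $V$; applying Proposition \ref{UCP} to $u_k(\cdot + e) - u_k(\cdot)$ on the open set $V \cap (V - e)$ (where both the function and its fractional Laplacian vanish, for $|e|$ small) shows that $u_k$ is globally constant, and passing to the limit forces $u_*$ to be identically $\pm 1$, contradicting $E_* \cap \Omega \neq \Omega, \emptyset$. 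Once $W^{(1)} = W^{(2)}$ is established, subtraction of the equations yields $f_k^{(1)} = f_k^{(2)}$ on $\Omega$, and hence $f^{(1)} = f^{(2)}$ in the limit.

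For part (ii), on $V$ we have $f_k^{(j)} = 0$, so the equation reads $(-\Delta)^s u_k^{(j)} = -\epsilon_k^{-2s} W'(u_k^{(j)})$. Combining the hypothesis with Proposition \ref{convTh}(iv) (which identifies the distributional limit of $(-\Delta)^s u_k^{(j)}$ with $(-\Delta)^s u_*^{(j)}$) yields $(-\Delta)^s u_*^{(1)} = (-\Delta)^s u_*^{(2)}$ on $V$, where $(-\Delta)^s u_*^{(j)}$ has a pointwise integral representation on $V \setminus \partial E_*^{(j)}$ that is strictly positive on $E_*^{(j)}$ and strictly negative on its complement (since neither $E_*^{(j)}$ nor its complement has zero measure, thanks to $E_*^{(j)} \cap \Omega \neq \Omega, \emptyset$). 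Matching signs on the full-measure set $V \setminus (\partial E_*^{(1)} \cup \partial E_*^{(2)})$ forces $u_*^{(1)} = u_*^{(2)}$ a.e.\ on $V$; Proposition \ref{UCP} applied to $u_*^{(1)} - u_*^{(2)}$ then promotes this to equality on $\mathbb{R}^n$, yielding $E_*^{(1)} = E_*^{(2)}$ with matching boundaries and $2s$-perimeters. To recover $f$, I pick an open $V' \subset V \setminus \partial E_*$ (possible since $V \not\subset \partial E_*$) on which $u_*$ takes a single value, say $+1$; on $V'$ the sequence $u_k^{(j)}$ converges uniformly to $+1$, so for large $k$ it lies in a neighborhood where $W'$ is strictly monotonic (by $W''(+1) > 0$), and $W'(u_k^{(1)}) = W'(u_k^{(2)})$ forces $u_k^{(1)} = u_k^{(2)}$ on $V'$. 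Proposition \ref{UCP} applied to $u_k^{(1)} - u_k^{(2)}$ promotes equality to $\mathbb{R}^n$ for large $k$, and subtracting the equations gives $f_k^{(1)} = f_k^{(2)}$, whence $f^{(1)} = f^{(2)}$.

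The main obstacles I anticipate are the constant case in part (i) and the sign-matching step in part (ii). In the former, analyticity alone gives no information, and one must instead use a translation-invariance argument plus unique continuation to force global constancy of $u_k$; here the assumption $E_* \cap \Omega \neq \Omega, \emptyset$ is essential to derive the contradiction after passing to the limit. In the latter, one must carefully identify the distributional limit of $(-\Delta)^s u_k^{(j)}$ with the pointwise integral formula for $(-\Delta)^s u_*^{(j)}$ on open subsets of $V$ away from $\partial E_*^{(j)}$, so that a pointwise sign comparison can determine $E_*^{(j)}$ itself rather than only the combination appearing in the limit.
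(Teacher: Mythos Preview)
Your argument is correct, but it is organized quite differently from the paper's and uses some different ingredients.

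In part (i) the paper does \emph{not} begin with unique continuation. Instead it first restricts to a small set $U_0\subset V$ away from $\partial E_*^{(j)}$ where $u_k\to\pm1$, and then rules out the constant case much more cheaply than your translation trick: if $u_k\equiv\pm1$ on an open subset of $U_0$, then $W'(\pm1)=0$ and $f_k=0$ give $(-\Delta)^s u_k=0$ there, so Corollary~\ref{UCPc} forces $u_k\equiv\pm1$ on $\mathbb{R}^n$, contradicting $g_k\not\equiv\pm1$. From this the paper extracts a sequence $u_k(x_k)\to\pm1$ with $u_k(x_k)\neq\pm1$, giving an accumulation point for the zero set of the analytic function ${W^{(1)}}'-{W^{(2)}}'$. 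Only \emph{after} $W^{(1)}=W^{(2)}$ is secured does the paper invoke Proposition~\ref{UCP} to get $u_k^{(1)}=u_k^{(2)}$ on $\mathbb{R}^n$. Your route---UCP first, then a dichotomy on whether $u_k(V)$ is an interval, with the constant case handled by translating and applying Proposition~\ref{UCP} to $u_k(\cdot+e)-u_k$---is valid (the translated difference is still in $\hat H^s(\Omega)\cap L^\infty$ since $u_k\in H^s_{loc}\cap L^\infty$), but it is more elaborate and uses the hypothesis $E_*\cap\Omega\neq\Omega,\emptyset$ where the paper uses $g_k\not\equiv\pm1$.

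In part (ii) the divergence is sharper. The paper never passes to the limit in $(-\Delta)^s u_k$: it works directly on $U_0$, where monotonicity of $W'$ near $\pm1$ is available, and disposes of the ``opposite limits'' scenario (say $u_k^{(1)}\to-1$, $u_k^{(2)}\to+1$) by showing it would force $u_k^{(2)}-u_k^{(1)}$ to be a constant on $U_0$ with vanishing fractional Laplacian, hence globally constant by Corollary~\ref{UCPc}, contradicting the nontriviality of $E_*^{(j)}\cap\Omega$. Your approach instead takes the $H^{-s}$-limit via Proposition~\ref{convTh}(iv) to obtain $(-\Delta)^s u_*^{(1)}=(-\Delta)^s u_*^{(2)}$ on $V$, then exploits the sign identity $(-\Delta)^s u_*=C(x)\,u_*$ with $C(x)>0$ (strict by nontriviality) to read off $u_*^{(1)}=u_*^{(2)}$ pointwise a.e.\ on $V$, and finally applies Proposition~\ref{UCP}. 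This is a legitimate alternative and has the virtue of determining $E_*$ before returning to the monotonicity argument; the cost is the extra care you flag in identifying the distributional limit with the pointwise integral on $V\setminus(\partial E_*^{(1)}\cup\partial E_*^{(2)})$.

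One small point: deducing $\partial E_*^{(1)}\cap\Omega=\partial E_*^{(2)}\cap\Omega$ from the a.e.\ equality $u_*^{(1)}=u_*^{(2)}$ and openness alone is a bit delicate (two open sets can agree a.e.\ yet have different boundaries). Since you have already obtained $u_k^{(1)}=u_k^{(2)}$ on $\mathbb{R}^n$, it is cleaner to invoke Proposition~\ref{convTh}(v) on level sets, as the paper does, to identify the boundaries directly.
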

	
	We mention that here we do not consider exterior measurements of the fractional derivatives as in \cite{ghosh2020calderon} since $(-\Delta)^su$ is not defined outside $\Omega$ for a general $u\in \hat{H}^s(\Omega)$.
	
	\begin{proof}
		\textbf{Determine $W$ from the knowledge of 
			$\{u_k|_V, (-\Delta)^s u_k|_V\}$ in (i)}: The equation in (\ref{fracAC}) tells us that 
		\begin{equation}\label{fracACkj}
			(-\Delta)^s u^{(j)}_k+\frac{1}{\epsilon^{2s}_k}{W^{(j)}}'(u^{(j)}_k) = f^{(j)}
		\end{equation}
		in $\Omega$. By the assumptions in (i), we have
		$(-\Delta)^su_k^{(1)}|_V= (-\Delta)^su_k^{(2)}|_V$, $\mathrm{supp}\, f^{(j)}_k\subset K$ and $V\cap K = \emptyset$ implying that
		$${W^{(1)}}'(u_k)|_V= {W^{(2)}}'(u_k)|_V,\qquad u_k|_V:= u_k^{(1)}|_V= u_k^{(2)}|_V.$$
		
		By Proposition \ref{convTh} (ii), we can choose a small nonempty open $U_0\subset V$ away from $\partial E^{(j)}_*\cap \Omega$ s.t.
		$u_k\to 1$ or $-1$ in $U_0$. We claim that $u_k\not\equiv 1$ or $-1$ in any open $U\subset U_0$. Otherwise, we have $u_k\equiv 1$ or $-1$ in $U$ and $f^{(j)}_k= 0$ in $U$. Since ${W^{(j)}}'(\pm 1)= 0$, (\ref{fracACkj}) implies that
		$(-\Delta)^s u^{(j)}_k= 0$ in $U$. Then 
		by Corollary \ref{UCPc}, we have $u^{(j)}_k\equiv 1$ or $-1$ in $\mathbb{R}^n$, which contradicts the assumption that
		$g^{(j)}_k\not\equiv 1$ or $-1$ in $\mathbb{R}^n\setminus\Omega$. 
		Hence, by the continuity of $u_k$ in $U_0$, we can choose $\{x_k\}\subset U_0$ s.t. $x_k\to x_0\in U_0$, $u_k(x_k)\to 1$ or $-1$ but $u_k(x_k)\neq 1$ or $-1$ for each $k$. Based on
		$${W^{(1)}}'(u_k(x_k))= {W^{(2)}}'(u_k(x_k)),$$
		we conclude that ${W^{(1)}}'= {W^{(2)}}'$ and 
		thus ${W^{(1)}}= {W^{(2)}}=: W$ on $\mathbb{R}$ by the analytic continuation.

		\textbf{Determine $\{u_k|_{U_0}\}$ from the knowledge of $W$ and $\{(-\Delta)^s u_k|_V\}$ in (ii)}: By (\ref{fracACkj}) and the assumptions in (ii), we have 
		$$W'(u_k^{(1)})|_V= W'(u_k^{(2)})|_V.$$
		Note that $W''(\pm 1)> 0$ so $W'$ is strictly increasing near $\pm 1$. Recall that we have  
		$u^{(j)}_k\to 1$ or $-1$ in $U_0$. Suppose 
		$u^{(1)}_k\to -1$ and $u^{(2)}_k\to 1$ in $U_0$. Then the monotonicity of $W'$ near $1$ implies that
		$u^{(2)}_k= u^{(1)}_k+ 2$ in $U_0$ for large $k$. Since we have $(-\Delta)^s (u^{(2)}_k-
		u^{(1)}_k)= 0$ in $U_0$, by Corollary \ref{UCPc} we have $u^{(2)}_k= u^{(1)}_k+ 2$ in
		$\mathbb{R}^n$ and thus the only possibility is that $E^{(2)}_*\cap \Omega= \Omega$, $E^{(1)}_*\cap \Omega= \emptyset$ based on 
		Proposition \ref{convTh} (ii). However, this contradicts with the assumption that 
		$E^{(j)}_*\cap \Omega$ is nontrivial. Hence, we conclude that
		$\{u^{(1)}_k\}$ and $\{u^{(2)}_k\}$ converge both to $1$ or both to $-1$ in $U_0$
		Then we have $u_k^{(1)}= u_k^{(2)}$ in $U_0$ for large $k$ based on the monotonicity of $W'$ near $\pm 1$.
		
		\textbf{Determine $f$, $\partial E_*\cap \Omega$ and $P_{2s}(E_*, \Omega')$}: We have shown that in either (i) or (ii), we have
		$$u_k^{(1)}|_{U_0}= u_k^{(2)}|_{U_0},\qquad (-\Delta)^su_k^{(1)}|_{U_0}= (-\Delta)^su_k^{(2)}|_{U_0}$$
		for large $k$. Hence, by Proposition \ref{UCP} we conclude that $u^{(2)}_k= u^{(1)}_k=: u_k$ in $\mathbb{R}^n$ for large $k$. Then the identity $$\partial E^{(1)}_*\cap \Omega= \partial E^{(2)}_*\cap \Omega$$
		immediately follows from Proposition \ref{convTh} (ii). Another way to obtain this identity is to pick a value $\delta\in (-1, 1)$ and look at the level set $\{u^{(j)}_k= \delta\}$. Then the surface can be determined based on the locally uniform convergence of the level sets in Proposition \ref{convTh} (v). The identity
		$$P_{2s}(E^{(1)}_*, \Omega')= P_{2s}(E^{(2)}_*, \Omega')$$ immediately follows from Proposition \ref{convTh} (iii).
		
		We also conclude that $f^{(1)}= f^{(2)}$ based on Proposition \ref{convTh} (iv). More precisely, we have
		$$ f^{(j)}(x)= \lim_{k\to \infty}[\frac{1}{\epsilon^{2s}_k}W'(u_k(x))+
		(\frac{c_{n, s}}{2}\int_{\mathbb{R}^n}
		\frac{|u_*(x)-u_*(y)|^2}{|x-y|^{n+2s}}\mathrm{d}y)u_*(x)]$$
		in $H^{-s}(\Omega')$ for each open $\Omega'$ compactly contained in $\Omega$.
	\end{proof}
	
	\section{Fractional multi-phase transitions}
	Our goal here is to partially extend Theorem \ref{Th} in a more general setting.
	
	Here we consider the more general multiple-well potential $W$ instead of the double-well potential in previous sections. 
	More precisely, instead of (\ref{Wpm1}), now we require that
	\begin{equation}\label{WpmM}
		\{W= 0\}=\mathcal{Z}:= \{a_1,\cdots,a_m\},
	\end{equation}
	$W'= 0$ on $\mathcal{Z}$ and $W''> 0$ on $\mathcal{Z}$. Here $m\geq 2$. 
	Then (\ref{fracAC}) can be viewed as a  fractional analogue of the classical model studied in \cite{baldo1990minimal}.
	A basic example of a multiple-well potential is 
	$$W(t)= \prod^m_{j=1}(t-a_j)^2.$$ 
	In this case, we have $p= 2m$ in (\ref{pest}).
	
	Let $\mathfrak{E}= (E_1,\cdots,E_m)$ be a partition of $\mathbb{R}^n$. We define
	$\mathcal{P}^a_{2s}(\mathfrak{E}, \Omega)$
	by the expression
	$$\frac{1}{2}\sum^m_{i, j=1}(a_i- a_j)^2(\int_{E_i\cap \Omega}\int_{E_j\cap \Omega}
	+ \int_{E_i\cap \Omega}\int_{E_j\cap \Omega^c}+ \int_{E_i\cap \Omega^c}\int_{E_j\cap \Omega})\frac{1}{|x-y|^{n+2s}}\mathrm{d}x\mathrm{d}y,$$
	which is a natural generalization of the fractional $2s$-perimeter in $\Omega$ defined by (\ref{farc2sP}).
	
	We say that $\mathfrak{E}$ is a nonlocal $2s$-minimal partition in $\Omega$ provided that the first variation
	$$\delta \mathcal{P}^a_{2s}(\mathfrak{E}, \Omega)[X]:= \frac{\mathrm{d}}{\mathrm{d}t}\mathcal{P}^a_{2s}(\phi_t(\mathfrak{E}), \Omega)\vert_{t=0}$$
	vanishes for each flow $\{\phi_t\}$ generated by the vector field $X\in C^1_c(\Omega; \mathbb{R}^n)$. Here 
	$$\phi_t(\mathfrak{E}):= (\phi_t(E_1),\cdots, \phi_t(E_m))$$
	is still a partition of $\mathbb{R}^n$.
	
	The following proposition is a part of Theorem 1.1 in \cite{gabard2025fractional}, which can be viewed as an analogue of $\ref{convTh}$ in the setting of multi-phase transitions.
	
	\begin{prop}\label{convThM}
		Suppose $s\in (0, \frac{1}{2})$ and  $\epsilon_k\to 0^+$. Let $\{g_k\}\subset C^{0, 1}_{loc}(\mathbb{R}^n)$ s.t.
		$$\sup_k ||g_k||_{L^\infty(\mathbb{R}^n\setminus\Omega)}< \infty$$ and $g_k\to g$ for some $\mathcal{Z}$-valued $g$ a.e. in $\mathbb{R}^n\setminus\Omega$.
		
		Let $u_k\in H^s_{g_k}(\Omega)\cap L^p(\Omega)$ be a solution of
		(\ref{fracAC}) corresponding to the parameter $\epsilon_k$, the exterior data $g_k$ and the zero source. If
		$$\sup_k \mathcal{E}_{\epsilon_k}(u_k, \Omega)< \infty,$$
		then there exists a subsequence (still denoted by $\{u_k\}$)
		and a nonlocal $2s$-minimal partition $\mathfrak{E}_*$ of $\mathbb{R}^n$ with $\mathcal{P}^a_{2s}(\mathfrak{E}_*, \Omega)< \infty$ s.t. the following statements hold.
		
		(i)\, $E_{*_j}\cap \Omega$ is essentially open (i.e. it differs from an open set by a set of
		zero $n$-dimensional Lebesgue measure) and 
		$P_{2s}(E_{*_j}, \Omega)< \infty$.
		
		(ii)\, We have the convergence
		$$u_k\to u_*:= \sum_{j=1}a_j\chi_{E_{*_j}}\quad
		\mathrm{in}\,\,C^{1, \alpha}_{loc}(\Omega\setminus \partial \mathfrak{E}_*)$$ 
		for some $\alpha= \alpha(n, s)\in (0, 1)$, where 
		$$\partial \mathfrak{E}_*:=\bigcup^m_{j=1}\partial E_{*_j}.$$
		
		(iii)\, For each open $\Omega'$ compactly contained in $\Omega$, we have
		$$\mathcal{E}(u_k, \Omega')\to \frac{c_{n,s}}{2}\mathcal{P}^a_{2s}(\mathfrak{E}_*, \Omega').$$
		
	\end{prop}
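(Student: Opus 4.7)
The plan is to adapt the compactness and $\Gamma$-convergence machinery developed for the double-well case (Proposition \ref{convTh}) to the multi-well setting, following the strategy of \cite{millot2019asymptotics} with the vector-valued adjustments of \cite{gabard2025fractional}. Compactness comes first: the uniform bound on $\mathcal{E}_{\epsilon_k}(u_k,\Omega)$ immediately gives $\sup_k \mathcal{E}(u_k, \Omega) < \infty$ and forces $\int_\Omega W(u_k) \leq C\epsilon_k^{2s} \to 0$; combined with the growth $W(t)\gtrsim |t|^p - 1$ extracted from (\ref{pest}), this yields a uniform bound on $\{u_k\}$ in $\hat{H}^s(\Omega) \cap L^p(\Omega)$ via the coerciveness estimate used in Proposition \ref{exist}. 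Passing to a subsequence I would obtain weak and a.e.\ pointwise convergence to some $u_*$; since $W(u_k) \to 0$ in $L^1(\Omega)$ and $W^{-1}(0) = \mathcal{Z}$ is finite, $u_*$ is $\mathcal{Z}$-valued a.e.\ in $\Omega$, and together with $g_k \to g \in \mathcal{Z}$ outside $\Omega$ this produces the measurable partition $\mathfrak{E}_*$ with $u_* = \sum_j a_j\chi_{E_{*_j}}$. The bound $\mathcal{P}^a_{2s}(\mathfrak{E}_*, \Omega) < \infty$ then follows from Fatou applied to $\mathcal{E}(\cdot,\Omega)$ together with the identity $\mathcal{E}(u_*, \Omega) = \frac{c_{n,s}}{2}\mathcal{P}^a_{2s}(\mathfrak{E}_*, \Omega)$.

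To upgrade to (i) and (ii), I would pass to the Caffarelli-Silvestre extensions $\tilde{u}_k$ on $\mathbb{R}^{n+1}_+$, exactly as in the proof of Proposition \ref{UCP}. Uniform $L^\infty$ and interior Hölder estimates for the extension problem (from \cite{millot2019asymptotics}) imply that on any compact set contained in the essential interior of $E_{*_j}$, the trace $u_k$ stays in a small neighborhood of $a_j$ for large $k$; the nondegeneracy $W''(a_j) > 0$ then turns the equation into a uniformly nondegenerate fractional elliptic problem, and Schauder-type bounds deliver $C^{1, \alpha}_{loc}$ convergence of $u_k$ to $a_j$. A density-estimate argument in the spirit of \cite{millot2019asymptotics} ensures that $E_{*_j}\cap \Omega$ is essentially open, yielding (i) and (ii) simultaneously.

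For (iii) I would prove matching $\Gamma$-$\liminf$ and $\limsup$ inequalities between $\mathcal{E}(u_k,\Omega')$ and $\frac{c_{n,s}}{2}\mathcal{P}^a_{2s}(\mathfrak{E}_*,\Omega')$. The lower bound is built by a blow-up analysis at each interface between $E_{*_i}$ and $E_{*_j}$, invoking the fractional transition energy identified in \cite{savin2012gamma}; the upper bound comes from an explicit recovery-sequence construction. Once (iii) is established, the minimality of $\mathfrak{E}_*$ follows by passing to the limit in the inner variation: for $X \in C^1_c(\Omega; \mathbb{R}^n)$ the identity $\frac{\mathrm{d}}{\mathrm{d}t}\mathcal{E}_{\epsilon_k}(u_k\circ \phi_t^{-1},\Omega)\vert_{t=0} = 0$ (valid since $u_k$ solves the Euler-Lagrange equation with zero source) transfers, via the energy convergence and $C^{1,\alpha}_{loc}$ convergence away from $\partial\mathfrak{E}_*$, to $\delta \mathcal{P}^a_{2s}(\mathfrak{E}_*, \Omega)[X] = 0$.

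I expect the main obstacle to be the sharp $\Gamma$-$\liminf$ inequality above. In the double-well setting every interface trivially separates $\{-1\}$ from $\{+1\}$, but in the multi-well case a transition between $a_i$ and $a_j$ could in principle route through other wells $a_\ell$ to try to lower the $H^s$-seminorm cost; identifying the correct transition constant $(a_i - a_j)^2$ at each interface therefore requires a nontrivial one-dimensional profile analysis on the extension side, and constitutes the essential new content beyond Proposition \ref{convTh}.
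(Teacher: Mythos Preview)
The paper does not prove Proposition~\ref{convThM}; it is quoted as part of Theorem~1.1 in \cite{gabard2025fractional}, so there is no in-paper argument to compare your sketch against directly.

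Your outline has a genuine gap in step~(iii). A recovery-sequence construction bounds the $\Gamma$-$\limsup$ over \emph{all} sequences converging to $u_*$, but the $u_k$ here are given critical points, not minimizers, so nothing ties $\mathcal{E}(u_k,\Omega')$ to the energy of a constructed competitor. The paper itself flags exactly this distinction in the remark following Proposition~\ref{convTh}: for minimizers one can argue via $\Gamma$-convergence as in \cite{savin2012gamma}, but for general solutions one needs the extension-based monotonicity and clearing-out analysis of \cite{millot2019asymptotics} to rule out energy concentration on $\partial\mathfrak{E}_*$. Your Step~2 invokes the extension for regularity but you abandon it precisely where it is indispensable. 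Separately, the obstacle you single out at the end is a red herring in the regime $s<\tfrac{1}{2}$: for a step function $u_*=\sum_j a_j\chi_{E_{*_j}}$ one has $|u_*(x)-u_*(y)|^2=(a_i-a_j)^2$ on $E_{*_i}\times E_{*_j}$, so the identity $\mathcal{E}(u_*,\Omega)=\tfrac{c_{n,s}}{2}\mathcal{P}^a_{2s}(\mathfrak{E}_*,\Omega)$ is immediate and the $\liminf$ is pure Fatou with no profile analysis. The ``routing through other wells'' phenomenon belongs to the local Modica--Mortola/Baldo setting \cite{baldo1990minimal}, where the transition cost is a geodesic distance in $\sqrt{W}$; it is absent here because the fractional seminorm is purely quadratic in the jump.
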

	
	Now we are ready to formulate the corresponding inverse problem.
	
	\begin{thm}
		For $j=1, 2$, let $u^{(j)}_k\in H^s_{g^{(j)}_k}(\Omega)\cap L^p(\Omega)$ be a solution of 
		(\ref{fracAC}) corresponding to the potential $W^{(j)}$, the exterior data $g^{(j)}_k$ and the zero source. We assume that $W^{(j)}$ satisfies (\ref{WpmM}) and (\ref{pest}), and we assume that 
		$\{g^{(j)}_k\}$, $\{u^{(j)}_k\}$, $g^{(j)}$, and $\mathfrak{E}^{(j)}_*$ satisfy all the assumptions and conditions in the statement of Proposition \ref{convThM}.
		
		We further assume that $W^{(j)}$ is analytic on $\mathbb{R}$. We also further assume that 
		$g^{(j)}_k\not\equiv 1$ or $-1$ in $\mathbb{R}^n\setminus\Omega$. Let $V\subset \Omega$ be a nonempty open set.
		
		Suppose we know that
		$$u_k^{(1)}|_V= u_k^{(2)}|_V,\qquad (-\Delta)^su_k^{(1)}|_V= (-\Delta)^su_k^{(2)}|_V$$
		for all $k$. Then we have
		$$W^{(1)}= W^{(2)},\quad \partial \mathfrak{E}^{(1)}_*\cap \Omega= \partial \mathfrak{E}^{(2)}_*\cap \Omega,\quad \mathcal{P}^a_{2s}(\mathfrak{E}^{(1)}_*, \Omega')= \mathcal{P}^a_{2s}(\mathfrak{E}^{(2)}_*, \Omega')$$
		for each open $\Omega'$ compactly contained in $\Omega$.
	\end{thm}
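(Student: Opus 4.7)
The plan is to follow the strategy used for part (i) of Theorem \ref{Th}, making the adjustments needed to handle the multi-phase geometry. From the equation in (\ref{fracAC}) with zero source, the hypotheses on $V$ give
\begin{equation*}
(W^{(1)})'(u_k) = (W^{(2)})'(u_k) \quad \text{on } V,
\end{equation*}
where $u_k := u_k^{(1)}|_V = u_k^{(2)}|_V$. First, I would pick a small nonempty open $U_0 \subset V$ with $U_0 \cap (\partial \mathfrak{E}_*^{(1)} \cup \partial \mathfrak{E}_*^{(2)}) = \emptyset$; this is possible because each $E_{*_j}^{(i)}$ is essentially open, so its topological boundary has Lebesgue measure zero. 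By Proposition \ref{convThM}(ii), $u_k^{(1)}$ and $u_k^{(2)}$ converge locally uniformly on $U_0$ to their respective limits, and since they coincide on $V$ the common limit equals some $a_* \in \mathcal{Z}^{(1)} \cap \mathcal{Z}^{(2)}$ throughout $U_0$.

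Next, I claim that $u_k$ is not identically equal to $a_*$ on any nonempty open $U \subset U_0$. Otherwise, since $(W^{(j)})'(a_*) = 0$ and the source is zero, the PDE would force $(-\Delta)^s u_k^{(j)} \equiv 0$ in $U$, and Corollary \ref{UCPc} would yield $u_k^{(j)} \equiv a_*$ in $\mathbb{R}^n$. This would make $g_k^{(j)} \equiv a_*$ in $\mathbb{R}^n \setminus \Omega$, contradicting the nontriviality assumption on $g_k^{(j)}$ (the natural multi-phase reading of ``$g_k^{(j)} \not\equiv \pm 1$'' being that $g_k^{(j)}$ is not a.e.\ equal to any single element of $\mathcal{Z}$, which is exactly what is needed here). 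By the continuity of $u_k$ in $U_0$ I can therefore choose $x_k \in U_0$ with $x_k \to x_0$, $u_k(x_k) \to a_*$ and $u_k(x_k) \neq a_*$. Since $(W^{(1)})'$ and $(W^{(2)})'$ agree on the accumulating sequence $\{u_k(x_k)\}$ and both potentials are analytic on $\mathbb{R}$, analytic continuation yields $(W^{(1)})' \equiv (W^{(2)})'$ on $\mathbb{R}$, and since $W^{(1)}(a_*) = W^{(2)}(a_*) = 0$ this upgrades to $W^{(1)} = W^{(2)} =: W$.

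Finally, with the potentials identified, subtracting the two equations gives $(-\Delta)^s(u_k^{(1)} - u_k^{(2)}) = -\epsilon_k^{-2s}(W'(u_k^{(1)}) - W'(u_k^{(2)}))$ on $\Omega$, and on $V$ both sides vanish. Applying Proposition \ref{UCP} to $u_k^{(1)} - u_k^{(2)} \in \hat{H}^s(\Omega) \cap L^\infty(\mathbb{R}^n)$ (the $L^\infty$ bound following from the maximum principle applied to each solution, as noted after Corollary \ref{UCPc}) yields $u_k^{(1)} \equiv u_k^{(2)}$ on $\mathbb{R}^n$. The identities $\partial \mathfrak{E}_*^{(1)} \cap \Omega = \partial \mathfrak{E}_*^{(2)} \cap \Omega$ and $\mathcal{P}_{2s}^a(\mathfrak{E}_*^{(1)}, \Omega') = \mathcal{P}_{2s}^a(\mathfrak{E}_*^{(2)}, \Omega')$ then follow from Proposition \ref{convThM}(ii) and (iii) applied to this common sequence.

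The step I expect to be most delicate is the matching of the limit values inside $V$: in the double-well case this was automatic because the only possible limits were $\pm 1$, so both converging sequences necessarily selected from the same two-point set. In the multi-well setting one must carefully use essential openness of each $E_{*_j}^{(i)}$ to locate a common open patch $U_0 \subset V$ on which both limiting profiles are constant, and then verify that the two constants must agree and therefore lie in $\mathcal{Z}^{(1)} \cap \mathcal{Z}^{(2)}$, before analytic continuation can be invoked. A secondary subtlety is interpreting the exterior nontriviality hypothesis in a way adequate for the multi-well case; beyond this, the proof is a faithful adaptation of the argument for Theorem \ref{Th}(i).
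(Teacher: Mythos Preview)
Your proposal is correct and follows essentially the same route as the paper. The paper itself gives no detailed proof for this theorem, stating only that ``the proof relies on essentially the same argument as in the proof of Theorem \ref{Th}'' and listing the ingredients (Proposition \ref{convThM}(ii), analytic continuation, unique continuation, then Proposition \ref{convThM}(ii)--(iii)); your write-up is a faithful expansion of exactly that scheme, and you correctly flag the two genuine multi-phase adjustments---locating a common $U_0$ on which both limits are constant and hence equal (giving $a_*\in\mathcal{Z}^{(1)}\cap\mathcal{Z}^{(2)}$), and reading the exterior nontriviality hypothesis as ``not identically equal to any well,'' which is clearly what was intended despite the residual $\pm 1$ notation in the statement.
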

	
	The proof relies on essentially the same argument as in the proof of Theorem \ref{Th}. To avoid redundancy, we will not go through full details again. Briefly speaking, the determination of the potential $W$ and $\{u_k\}$ in $\mathbb{R}^n$ follows from Proposition \ref{convThM} (ii), analytic continuation and the unique continuation of the fractional Laplacian. Then the determination of 
	$\partial \mathfrak{E}_*\cap \Omega$ and $\mathcal{P}^a_{2s}(\mathfrak{E}_*, \Omega')$ follows from Proposition \ref{convThM} (ii) and (iii).
	
	\bibliographystyle{plain}
	{\small\bibliography{Reference14}}
\end{document}